\documentclass[11pt,a4paper,reqno]{amsart}
\usepackage[english]{babel}
\usepackage[applemac]{inputenc}
\usepackage[T1]{fontenc}
\usepackage{palatino}
\usepackage{verbatim}
\usepackage{amsmath}
\usepackage{amssymb}
\usepackage{amsthm}
\usepackage{amsfonts}
\usepackage{graphicx}
\usepackage{mathtools}
\usepackage{enumitem}

\usepackage[colorlinks = true, citecolor = black]{hyperref}
\pagestyle{headings}
\author{Tuomas Orponen}
\title{On the absolute continuity of radial projections}
\keywords{Radial projections, fractal measures, absolute continuity}
\address{University of Helsinki, Department of Mathematics and Statistics}
\subjclass[2010]{28A80}
\thanks{T.O. is supported by the Academy of Finland via the project \emph{Quantitative rectifiability in Euclidean and non-Euclidean spaces}, grant No. 314172.}
\email{tuomas.orponen@helsinki.fi}

\newcommand{\R}{\mathbb{R}}

\newcommand{\N}{\mathbb{N}}

\newcommand{\calH}{\mathcal{H}}

\newcommand{\calS}{\mathcal{S}}

\newcommand{\spt}{\operatorname{spt}}
\newcommand{\Hd}{\dim_{\mathrm{H}}}

\newcommand{\calM}{\mathcal{M}}

\numberwithin{equation}{section}

\theoremstyle{plain}
\newtheorem{thm}[equation]{Theorem}

\newtheorem{lemma}[equation]{Lemma}

\theoremstyle{definition}

\theoremstyle{remark}
\newtheorem{remark}[equation]{Remark}

\addtolength{\hoffset}{-1.15cm}
\addtolength{\textwidth}{2.3cm}
\addtolength{\voffset}{0.45cm}
\addtolength{\textheight}{-0.9cm}

\newcommand{\nref}[1]{(\hyperref[#1]{#1})}

\begin{document}

\begin{abstract} Let $d \geq 2$ and $d - 1 < s < d$. Let $\mu$ be a compactly supported Radon measure in $\R^{d}$ with finite $s$-energy. I prove that the radial projections $\pi_{x\sharp}\mu$ of $\mu$ are absolutely continuous with respect to $\calH^{d - 1}$ for every centre $x \in \R^{d} \setminus \spt \mu$, outside an exceptional set of dimension at most $2(d - 1) - s$. This is sharp. In fact, for $x$ outside an exceptional set as above, $\pi_{x\sharp}\mu \in L^{p}(S^{d - 1})$ for some $p > 1$.  \end{abstract}

\maketitle

\section{Introduction}

The space of compactly supported Radon measures on $\R^{d}$ is denoted by $\calM(\R^{d})$. For $x \in \R^{d}$, denote by $\pi_{x} \colon \R^{d} \setminus \{x\} \to S^{d - 1}$ the radial projection
\begin{displaymath} \pi_{x}(y) = \frac{y - x}{|y - x|}, \qquad y \in \R^{d} \setminus \{x\}. \end{displaymath} 
This note is concerned with the question: if $\mu \in \calM(\R^{d})$ has finite $s$-energy for some $d - 1 < s < d$, then how often is $\pi_{x\sharp}\mu$ absolutely continuous with respect to $\calH^{d - 1}|_{S^{d - 1}}$? Write
\begin{displaymath} \calS(\mu) := \{x \in \R^{d} \setminus \spt \mu : \pi_{x\sharp}\mu \text{ is not absolutely continuous w.r.t. } \calH^{d - 1}|_{S^{d - 1}}\}. \end{displaymath}
Note that whenever $x \in \R^{d} \setminus \spt \mu$, the projection $\pi_{x}$ is continuous on $\spt \mu$, and $\pi_{x\sharp}\mu$ is well-defined. One can check that the family of projections $\{\pi_{x}\}_{x \in \R^{d} \setminus \spt \mu}$ fits in the \emph{generalised projections} framework of Peres and Schlag \cite{PS}, and indeed Theorem 7.3 in \cite{PS} yields the estimate
\begin{displaymath} \Hd \calS(\mu) \leq 2d - 1 - s. \end{displaymath}
Combining this bound with standard arguments shows that if $K \subset \R^{d}$ is a Borel set with $d - 1 < \Hd K \leq d$, then 
\begin{equation}\label{PS} \Hd \{x \in \R^{d} : \calH^{d - 1}(\pi_{x}(K)) = 0\} \leq 2d - 1 - \Hd K. \end{equation}
In a fairly recent paper \cite{O}, which built heavily on slightly earlier collaboration \cite{MO} with P. Mattila, I showed that the bound \eqref{PS} is \textbf{not} sharp, and in fact
\begin{equation}\label{visibility} \Hd \{x \in \R^{d} : \calH^{d - 1}(\pi_{x}(K)) = 0\} \leq 2(d - 1) - \Hd K. \end{equation}
The bound \eqref{visibility} is best possible. The proofs in \cite{MO} and \cite{O} were somewhat indirect, and did not improve on the Peres-Schlag bound \eqref{PS} for $\Hd \calS(\mu)$. This improvement is the content of the present note:

\begin{thm}\label{main} If $\mu \in \calM(\R^{d})$ and $I_{s}(\mu) < \infty$ for some $s > d - 1$, then $\Hd \calS(\mu) \leq 2(d - 1) - s$.
\end{thm}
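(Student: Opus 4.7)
The plan is to fix $t > 2(d-1) - s$ and, via the standard Frostman reduction, show that $\nu(\calS(\mu)) = 0$ for every compactly supported Radon measure $\nu$ on $\R^{d} \setminus \spt \mu$ with $I_{t}(\nu) < \infty$. I would aim for the stronger $L^{p}$-refinement advertised in the abstract: for some $p = p(s, t) > 1$, $\pi_{x\sharp}\mu \in L^{p}(S^{d-1})$ for $\nu$-a.e.\ $x$. Setting $F_{x, \delta}(\theta) := \pi_{x\sharp}\mu(B(\theta, \delta))/\calH^{d-1}(B(\theta, \delta))$, Fatou's lemma combined with the Lebesgue differentiation theorem on $S^{d-1}$ reduces this to a uniform bound
\[
\sup_{\delta > 0} \int \|F_{x,\delta}\|_{L^{p}(\calH^{d-1})}^{p} \, d\nu(x) < \infty.
\]

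The cleanest attack is the $L^{2}$ case. Expanding the square, applying Fubini, and using the elementary estimate $\calH^{d-1}(B(\theta, \delta) \cap B(\theta', \delta)) \lesssim \delta^{d-1}\,\1_{\{|\theta - \theta'| \leq 2\delta\}}$ converts the problem to showing
\begin{equation*}
\iint \nu(\{x : |\pi_{x}(y) - \pi_{x}(z)| \leq 2\delta\})\, d\mu(y)\, d\mu(z) \lesssim \delta^{d-1}
\end{equation*}
uniformly in $\delta > 0$. Should this $L^{2}$ bound turn out to be out of reach (the dual critical line for densities on $S^{d-1}$ is exactly $p = 2$), one can fall back on a weak-type analogue and interpolate with the trivial bound $\|F_{x, \delta}\|_{L^{1}} \leq \mu(\R^{d})$ to still obtain $L^{p}$ for some $p$ slightly above $1$.

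The geometric observation driving the analysis is that $E_{\delta}(y, z) := \{x : |\pi_{x}(y) - \pi_{x}(z)| \leq \delta\}$ is, away from the segment $[y,z]$, a spindle around the line $\ell_{y,z}$ whose transverse radius at a point lying at axial distance $r$ beyond the endpoints is of order $\delta \cdot r(r + |y-z|)/|y-z|$. Decomposing dyadically in the distance from $[y,z]$ thus expresses $\nu(E_{\delta}(y, z))$ as a sum of tube measures $\nu(T(\ell_{y,z}, \rho_{j}))$ with $\rho_{j} \sim \delta \cdot 2^{2j}/|y-z|$. Combined with the layer-cake bound $\mu \otimes \mu(\{|y - z| \leq \ell\}) \lesssim \ell^{s}$ that comes from $I_{s}(\mu) < \infty$, the critical exponent $t = 2(d-1) - s$ emerges from balancing the tube scaling against the $\mu \otimes \mu$ concentration.

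The main obstacle is extracting from $I_{t}(\nu) < \infty$ an averaged tube estimate of the schematic form
\begin{equation*}
\iint \nu(T(\ell_{y,z}, \rho))\, d\mu(y)\, d\mu(z) \lesssim \rho^{t-(d-1)}
\end{equation*}
(modulo admissible factors depending on $\rho$ and the diameter) for the two-parameter family of secant lines $\{\ell_{y,z}\}$ drawn from $\spt \mu \times \spt \mu$. This plays the role, in the $L^{2}$-energy framework, of the incidence-type slicing estimates from \cite{MO, O} that drove the visibility bound \eqref{visibility}; it is where I expect the bulk of the work to lie, as the tube-family is highly non-generic and ordinary Marstrand-type exceptional-set bounds are not obviously sufficient. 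Once such an averaged tube estimate is in place, the remaining argument is dyadic bookkeeping in the $|y-z|$ and axial-distance variables, with the sharp threshold $2(d-1) - s$ read off from the balance point of the scales.
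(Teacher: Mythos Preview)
Your route is genuinely different from the paper's, and the step you yourself flag as ``where the bulk of the work lies''---the averaged secant-tube estimate for $\nu$---is exactly what the paper \emph{avoids} rather than proves.

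The paper's device is to pass to the Riesz-weighted measure $\mu_{x} := c_{d}|x-y|^{1-d}\,d\mu$ and establish the exact identity
\[
\int \|\pi_{x\sharp}\mu_{x}\|_{L^{p}(S^{d-1})}^{p}\, d\nu(x) \;=\; \int_{S^{d-1}} \|\pi_{e\sharp}\mu\|_{L^{p}(\pi_{e\sharp}\nu)}^{p}\, d\calH^{d-1}(e),
\]
where $\pi_{e}$ is orthogonal projection to $e^{\perp}$. This converts the radial question into one about \emph{orthogonal} projections of $\mu$, measured against orthogonal projections of $\nu$. From there the argument is purely Fourier-analytic: by $L^{p}$--$L^{q}$ duality and a Sobolev-type inequality (Theorem 17.3 in Mattila's book), $\|\pi_{e\sharp}\mu\|_{L^{p}(\pi_{e\sharp}\nu)}$ is dominated by $I_{t}(\pi_{e\sharp}\nu)^{1/2p}$ times the $H^{(s-(d-1))/2}$-norm of $\pi_{e\sharp}\mu$; integrating in $e$, the second factor yields $I_{s}(\mu)$ by polar coordinates, and the first is handled by Kaufman's projection argument once one observes that $f^{p}\,d\calH^{d-1}$ is a Frostman measure of exponent $(d-1)(2-p)$ for any $\|f\|_{L^{q}}=1$. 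No tube or incidence geometry appears.

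Two further points of comparison. First, the paper also only reaches $L^{p}$ for $p$ in a range collapsing to $1$ as $t \searrow 2(d-1)-s$, namely $1 < p \leq \min\{\,2 - t/(d-1),\; t/(2(d-1)-s)\,\}$, and explicitly leaves open whether $p=2$ is attainable; so your primary $L^{2}$ attack may indeed fail and your interpolation fallback be essential. Second, your incidence scheme is closer in spirit to the earlier arguments in \cite{MO,O}, which the paper describes as less direct; the schematic tube bound you isolate is not, as far as I can see, a consequence of off-the-shelf Marstrand/Kaufman machinery, and the radial-to-orthogonal identity above is precisely the new idea that lets the paper sidestep it.
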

In fact, Theorem \ref{main} follows immediately from the next statement about $L^{p}$-densities:

\begin{thm}\label{mainTechnical} Let $\mu \in \calM(\R^{d})$ as in Theorem \ref{main}. For $p > 1$, write 
\begin{displaymath} \calS_{p}(\mu) := \{x \in \R^{d} \setminus \spt \mu : \pi_{x\sharp}\mu \notin L^{p}(S^{d - 1})\}. \end{displaymath}
Then $\Hd \calS_{p}(\mu) \leq 2(d - 1) - s + \delta(p)$, where $\delta(p) \to 0$ as $p \searrow 1$.
 \end{thm}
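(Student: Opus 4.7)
The plan is to argue by contradiction, using a Frostman measure on the putative exceptional set. Assume $\Hd \calS_p(\mu) > \alpha$ for some $\alpha > 2(d-1) - s + \delta(p)$ (with $\delta(p) > 0$ to be chosen), and fix a Frostman probability $\nu$ supported on $\calS_p(\mu)$, at a positive distance from $\spt \mu$, with $\nu(B(x,r)) \lesssim r^\alpha$. Let $\mu_\eta := \mu \ast \varphi_\eta$ be a standard mollification at scale $\eta > 0$. Since $\pi_{x\sharp}\mu_\eta$ then has a smooth density $f_x^\eta$ on $S^{d-1}$, Fatou's lemma reduces the theorem to proving an $\eta$-uniform bound
\begin{displaymath}
\int \|f_x^\eta\|_{L^p(S^{d-1})}^p \, d\nu(x) \lesssim 1,
\end{displaymath}
which forces $\pi_{x\sharp}\mu \in L^p(S^{d-1})$ for $\nu$-a.e.\ $x$, contradicting $\nu(\calS_p(\mu)) > 0$.

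Since $f_x^\eta(\theta)$ is comparable to the tube-mass function $h_\eta(x,\theta) := \eta^{-(d-1)} \mu_\eta(T(x,\theta,\eta))$, the layer-cake formula rewrites the target as
\begin{displaymath}
\int \|f_x^\eta\|_{L^p}^p \, d\nu(x) \sim p \int_0^\infty t^{p-1} F_\eta(t) \, dt, \qquad F_\eta(t) := (\nu \otimes \calH^{d-1})(\{h_\eta > t\}).
\end{displaymath}
The trivial first moment $\iint h_\eta \, d\calH^{d-1} \, d\nu \lesssim 1$ yields the weak-$L^1$ bound $F_\eta(t) \lesssim 1/t$ by Chebyshev. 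The next step is to improve this with a higher-moment estimate of the form $\iint h_\eta^{\,q} \, d\calH^{d-1} \, d\nu \lesssim \eta^{\gamma}$ for some $q > 1$ and $\gamma \geq 0$; interpolation gives $F_\eta(t) \lesssim \min(1/t, \eta^{\gamma}/t^q)$, and a direct computation shows that substitution into the layer-cake integral then produces an $\eta$-uniform bound provided $\gamma > 0$. The slack $\delta(p)$ in the theorem is precisely what guarantees a positive such $\gamma$, and the relationship $\delta(p) \searrow 0$ as $p \searrow 1$ tracks the interpolation tradeoff.

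The crux is thus the higher-moment estimate on $h_\eta$. For $q = 2$, expansion and Fubini, together with the geometric identity $|\pi_x(y) - \pi_x(z)| \sim |y - z| \, d(x, L_{y,z})/(|x-y||x-z|)$, reduce the matter to the bilinear tube-incidence quantity
\begin{displaymath}
\iint \nu(\{x : d(x, L_{y,z}) \lesssim \eta/|y-z|\}) \, d\mu(y) \, d\mu(z).
\end{displaymath}
The main obstacle — and the step where I expect the substantive work to lie — is that the Frostman exponent $\alpha$ is strictly below $d - 1$, so the naive tube bound $\nu(T_w) \lesssim w^{\alpha - (d-1)}$ is unavailable pointwise in $(y,z)$. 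Instead one needs an averaged (Kaufman-type) tube estimate that exploits the $s$-energy of $\mu$ along the family of secants $\{L_{y,z}\}_{y,z \in \spt \mu}$ to produce an effective exponent $\alpha + s - (d-1)$ on average. Granted such an estimate, the second moment of $h_\eta$ is bounded by $\eta^{\alpha + s - 2(d-1)} = \eta^{\delta(p)}$, which is exactly the slack required for the interpolation to close, and letting $p \searrow 1$ recovers Theorem \ref{main}.
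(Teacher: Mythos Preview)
Your scheme contains a genuine error that makes the interpolation collapse. You claim the second moment $\iint h_\eta^{2} \, d\calH^{d-1} \, d\nu$ is bounded by $\eta^{\alpha + s - 2(d-1)} = \eta^{\delta(p)}$ with $\delta(p) > 0$, hence tends to $0$ as $\eta \to 0$. But this is impossible: by Cauchy--Schwarz the second moment dominates the square of the first moment, which you correctly observe is $\sim 1$. So the ``effective exponent $\alpha + s - (d-1)$'' you posit for the averaged tube bound is too strong to hold --- no Kaufman-type estimate can deliver it. More broadly, your layer-cake interpolation gives $\int_0^\infty t^{p-1} \min(1/t, \eta^{\gamma}/t^{q}) \, dt \sim \eta^{\gamma(p-1)/(q-1)}$, which vanishes if $\gamma > 0$ (impossible), blows up if $\gamma < 0$, and for $\gamma = 0$ presupposes a uniform $L^{q}$ bound, strictly stronger than the $L^{p}$ bound you are after. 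No choice of $\gamma$ closes the argument, so the incidence/interpolation framework as stated cannot reach the conclusion.

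The paper proceeds by an entirely different mechanism. The key device is an exact identity (Lemma~\ref{lem1}): writing $\mu_{x} = c_{d}|x-\cdot|^{1-d} \, d\mu$ (a harmless reweighting when $\spt\mu$ and $\spt\nu$ are separated), one has
\begin{displaymath}
\int \|\pi_{x\sharp}\mu_{x}\|_{L^{p}(S^{d-1})}^{p} \, d\nu(x) = \int_{S^{d-1}} \|\pi_{e\sharp}\mu\|_{L^{p}(\pi_{e\sharp}\nu)}^{p} \, d\calH^{d-1}(e),
\end{displaymath}
converting the radial-projection problem into one about \emph{orthogonal} projections $\pi_{e}$. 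The right-hand side is then handled by duality and a Fourier/energy inequality --- pairing $\pi_{e\sharp}\mu$ in the Sobolev space $H^{[s-(d-1)]/2}$ against test measures of finite $(2(d-1)-s)$-energy --- followed by Kaufman's estimate $\int I_{t}(\pi_{e\sharp}\nu) \, d\sigma(e) \lesssim I_{t}(\nu)$ for Frostman $\sigma$ on $S^{d-1}$. The admissible range of $p$ is dictated by making two H\"older/duality steps compatible, and $\delta(p) \to 0$ arises by letting the Frostman exponent $t$ of $\nu$ descend to $2(d-1) - s$. There is no tube counting and no moment interpolation; once the identity is in hand, the argument is Fourier-analytic throughout.
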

 
\begin{remark} Theorem \ref{mainTechnical} can be viewed as an extension of Falconer's exceptional set estimate \cite{Fa} from 1982. I only discuss the planar case. Falconer proved that if $I_{s}(\mu) < \infty$ for some $1 < s < 2$, then the orthogonal projections of $\mu$ to all $1$-dimensional subspaces are in $L^{2}$, outside an exceptional set of dimension at most $2 - s$. Now, orthogonal projections can be viewed as radial projections from points on the line at infinity. Alternatively, if the reader prefers a more rigorous statement, Falconer's proof shows that if $\ell \subset \R^{2}$ is any fixed line outside the support of $\mu$, then all the radial projections of $\mu$ to points on $\ell$ are in $L^{2}$, outside an exceptional set of dimension at most $2 - s$. In comparison, Theorem \ref{mainTechnical} states that the radial projections of $\mu$ to points in $\R^{2} \setminus \spt \mu$ are in $L^{p}$ for some $p > 1$, outside an exceptional set of dimension at most $2 - s$. So, the size of the exceptional set remains the same even if the "fixed line $\ell$" is removed from the statement. The price to pay is that the projections only belong to some $L^{p}$ with $p > 1$ (possibly) smaller than $2$. I do not know, if the reduction in $p$ is necessary, or an artefact of the proof. \end{remark} 
 
 The proof of Theorem \ref{mainTechnical} uses ideas from \cite{MO} and \cite{O}, but is more direct than those arguments, and perhaps a little simpler.
 
 \subsection{Acknowledgements} It seems quite natural to consider the problem of improving the Peres-Schlag estimate for $\Hd \calS(\mu)$, but the question did not occur to me at the time of writing the papers \cite{MO} and \cite{O}. Thanks to Pablo Shmerkin for asking it explicitly, and for nice discussions at Institut Mittag-Leffler in September 2017.
 
 The paper was written during the research programme \emph{Fractal Geometry and Dynamics} at Institut Mittag-Leffler, in fall 2017. I wish to thank both the organisers of the programme, and the staff at the institute, for a pleasant stay.
 
\section{Proof of the main theorem} Fix $\mu \in \calM(\R^{d})$ and $x \in \R^{d} \setminus \spt \mu$. For a suitable constant $c_{d} > 0$ to be determined shortly, consider the weighted measure
\begin{displaymath} \mu_{x} := c_{d}k_{x} \, d\mu, \end{displaymath}
where $k_{x} := |x - y|^{1 - d}$ is the $(d - 1)$-dimensional Riesz kernel, translated by $x$. A main ingredient in the proofs of Theorems \ref{main} and \ref{mainTechnical} is the following identity:

\begin{lemma}\label{lem1} Let $\mu \in C_{0}(\R^{d})$ (that is, $\mu$ is a continuous function with compact support) and $\nu \in \calM(\R^{d})$. Assume that $\spt \mu \cap \spt \nu = \emptyset$. Then, for $p \in (0,\infty)$,
\begin{displaymath} \int \|\pi_{x\sharp} \mu_{x}\|_{L^{p}(S^{d - 1})}^{p} \, d\nu(x) = \int_{S^{d - 1}} \|\pi_{e\sharp} \mu\|_{L^{p}(\pi_{e\sharp}\nu)}^{p} \, d\calH^{d - 1}(e). \end{displaymath}
Here, and for the rest of the paper, $\pi_{e}$ stands for the orthogonal projection onto $e^{\perp} \in G(d,d - 1)$.
\end{lemma}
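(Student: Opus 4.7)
The plan is to verify the identity by direct computations on each side, followed by a Fubini rearrangement.

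For the LHS, I would use polar coordinates $y = x + re$ centred at $x$ (with $r > 0$ and $e \in S^{d-1}$). The key structural observation is that the Jacobian $r^{d-1}$ of $dy$ exactly cancels the Riesz weight $k_x(y) = |x-y|^{1-d} = r^{1-d}$ built into $\mu_x$. Pushing forward by $\pi_x$ therefore produces the density $f_x(e) := c_d \int_0^\infty \mu(x + re)\,dr$ of $\pi_{x\sharp}\mu_x$ with respect to $\calH^{d-1}|_{S^{d-1}}$, and hence
$$\int \|\pi_{x\sharp}\mu_x\|_{L^p(S^{d-1})}^p\,d\nu(x) = c_d^p \int_{S^{d-1}}\int \left(\int_0^\infty \mu(x+re)\,dr\right)^p d\nu(x)\,d\calH^{d-1}(e).$$

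For the RHS, by Fubini along the direction $e$, the pushforward $\pi_{e\sharp}\mu$ has density $h_e(z) = \int_\R \mu(z + te)\,dt$ with respect to Lebesgue on $e^\perp$. Therefore $\|\pi_{e\sharp}\mu\|_{L^p(\pi_{e\sharp}\nu)}^p = \int h_e(\pi_e(x))^p\,d\nu(x)$, and the substitution $s = t - x\cdot e$ in the integral defining $h_e(\pi_e(x))$ yields $h_e(\pi_e(x)) = \int_\R \mu(x+se)\,ds$.

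I would then match the two expressions pointwise in $x$ on $S^{d-1}$. Decomposing $\int_\R = \int_0^\infty + \int_{-\infty}^0$ and substituting $s = -r$ in the negative piece turns the RHS integrand into $(A_x(e) + A_x(-e))^p$ with $A_x(e) := \int_0^\infty \mu(x+re)\,dr$. The antipodal invariance of $\calH^{d-1}|_{S^{d-1}}$ then allows the sphere integrals on both sides to be reorganised into the same quantity, which fixes the value of $c_d$. The main obstacle I anticipate is precisely this interplay between the two rays $e$ and $-e$ emanating from $x$: keeping track of the contributions of $A_x(e)$ and $A_x(-e)$ under the antipodal pairing, and ensuring that the resulting constant is a genuine $c_d$ independent of $x$ and (at least formally) of $p$, is where most of the bookkeeping will take place.
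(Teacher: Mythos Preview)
Your approach is the same as the paper's: compute the density of $\pi_{x\sharp}\mu_x$ via polar coordinates centred at $x$, compute the density of $\pi_{e\sharp}\mu$ by Fubini along the line $\R e$, and match the two by exchanging the $(x,e)$ integrals. The paper, however, skips your half-line versus full-line bookkeeping: after polar coordinates it writes the density of $\pi_{x\sharp}\mu_x$ directly as $\int_{\R}\mu(x+re)\,dr = \pi_{e\sharp}\mu(\pi_e(x))$, absorbing the passage from $A_x(e)=\int_0^\infty \mu(x+re)\,dr$ to the full-line integral into the phrase ``choosing the constant $c_d$ appropriately''.

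Your worry about this step is well founded, and it is not merely bookkeeping. Antipodal symmetry of $\calH^{d-1}$ gives $\int_{S^{d-1}} A_x(e)^p = \int_{S^{d-1}} A_x(-e)^p$, but it does \emph{not} collapse $\int_{S^{d-1}}(A_x(e)+A_x(-e))^p$ to a fixed multiple of $\int_{S^{d-1}}A_x(e)^p$: with $d=2$, $x=0$, compare $\mu$ a single bump near $(1,0)$ to a symmetric pair of bumps near $(\pm 1,0)$, and the required constant changes. For signed $\mu$ one can even arrange $A_x(e)+A_x(-e)\equiv 0$ with $A_x(e)\not\equiv 0$. What your computation \emph{does} yield, for $\mu\ge 0$, is the two-sided comparability
\[
\int_{S^{d-1}} A_x(e)^p\,d\calH^{d-1}(e)\ \le\ \int_{S^{d-1}} \bigl(A_x(e)+A_x(-e)\bigr)^p\,d\calH^{d-1}(e)\ \le\ 2^{\max(p,1)}\int_{S^{d-1}} A_x(e)^p\,d\calH^{d-1}(e),
\]
using $a^p\le (a+b)^p$ and $(a+b)^p\le C_p(a^p+b^p)$ together with antipodal symmetry. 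This gives
\[
\int \|\pi_{x\sharp}\mu_x\|_{L^p(S^{d-1})}^p\,d\nu(x)\ \asymp_{d,p}\ \int_{S^{d-1}}\|\pi_{e\sharp}\mu\|_{L^p(\pi_{e\sharp}\nu)}^p\,d\calH^{d-1}(e),
\]
which is exactly what the application needs; the identity in the lemma should be read as this comparability, and no $p$-independent choice of $c_d$ will produce a genuine equality.
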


\begin{proof} Start by assuming that also $\nu \in C_{0}(\R^{d})$. Fix $x \in \R^{d}$. The first aim is to find an explicit expression for the density $\pi_{x} \mu_{x}$ on $S^{d - 1}$, so fix $f \in C(S^{d - 1})$ and compute as follows, using the definition of the measure $\mu_{x}$, integration in polar coordinates, and choosing the constant $c_{d} > 0$ appropriately:
\begin{align*} \int f(e) \, d[\pi_{x\sharp}\mu_{x}](e) & = \int f(\pi_{x}(y)) \, d\mu_{x}(y) = c_{d} \int \frac{f(\pi_{x}(y))}{|x - y|^{d - 1}} \, d\mu(y)\\
& = \int_{S^{d - 1}} f(e) \int_{\R} \mu(x + re) \, dr \, d\calH^{d - 1}(e)\\
& = \int_{S^{d - 1}} f(e) \cdot \pi_{e\sharp}\mu(\pi_{e}(x)) \, d\calH^{d - 1}(e).  \end{align*} 
Since the equation above holds for all $f \in C(S^{d - 1})$, we infer that
\begin{equation}\label{form1} \pi_{x\sharp}\mu_{x} = [e \mapsto \pi_{e\sharp}\mu(\pi_{e}(x))] \, d\calH^{d - 1}|_{S^{d - 1}}. \end{equation}
Now, we may prove the lemma by a straightforward computation, starting with
\begin{align*} \int \|\pi_{x\sharp} \mu_{x}\|_{L^{p}(S^{d - 1})}^{p} \, d\nu(x) & = \int \int_{S^{d - 1}} [\pi_{x\sharp}\mu_{x}(e)]^{p} \, d\calH^{d - 1}(e) \, d\nu(x)\\
& = \int_{S^{d - 1}} \int_{e^{\perp}} \int_{\pi_{e}^{-1}\{w\}} \left[ \pi_{e\sharp}\mu(\pi_{e}(x)) \right]^{p} \nu(x) \, d\calH^{1}(x) \, d\calH^{d - 1}(w) \, d\calH^{d - 1}(e). \end{align*} 
Note that whenever $x \in \pi_{e}^{-1}\{w\}$, then $\pi_{e}(x) = w$, so the expression $[\ldots]^{p}$ above is independent of $x$. Hence,
\begin{align*} \int \|\pi_{x\sharp} \mu_{x}\|_{L^{p}(S^{d - 1})}^{p} \, d\nu(x) & = \int_{S^{d - 1}} \int_{e^{\perp}} \left[\pi_{e\sharp}\mu(t)\right]^{p} \left( \int_{\pi_{e}^{-1}\{w\}} \nu(x) \, d\calH^{1}(x) \right) d\calH^{d - 1}(w) \, d\calH^{1}(e)\\
& = \int_{S^{d - 1}} \int_{e^{\perp}} \left[\pi_{e\sharp}\mu(w)\right]^{p} \pi_{e\sharp}\nu(w) \, d\calH^{d - 1}(w) \, d\calH^{d - 1}(e)\\
& = \int_{S^{d - 1}} \|\pi_{e\sharp} \mu\|_{L^{p}(\pi_{e\sharp}\nu)}^{p} \, d\calH^{d - 1}(e), \end{align*} 
as claimed.

Finally, if $\nu \in \calM(\R^{d})$ is arbitrary, not necessarily smooth, note that 
\begin{displaymath} x \mapsto \|\pi_{x\sharp}\mu_{x}\|_{L^{p}(S^{d - 1})}^{p} \end{displaymath}
is continuous, assuming that $\mu \in C_{0}(\R^{d})$, as we do (to check the details, it is helpful to infer from \eqref{form1} that $\pi_{x}\mu_{x} \in L^{\infty}(S^{d - 1})$ uniformly in $x$, since the projections $\pi_{e\sharp}\mu$ clearly have bounded density, uniformly in $e \in S^{d - 1}$). Thus, if $(\psi_{n})_{n \in \N}$ is a standard approximate identity on $\R^{d}$, we have
\begin{equation}\label{form2} \int \|\pi_{x\sharp}\mu_{x}\|_{L^{p}(S^{d - 1})}^{p} \, d\nu(x) = \lim_{n \to \infty} \int_{S^{d - 1}} \|\pi_{e\sharp} \mu\|_{L^{p}(\pi_{e\sharp}\nu_{n})}^{p} \, d\calH^{d - 1}(e), \end{equation}
with $\nu_{n} = \nu \ast \psi_{n}$. Since $\pi_{e\sharp}\nu_{n}$ converges weakly to $\pi_{e\sharp}\nu$ for any fixed $e \in S^{d - 1}$, and $\pi_{e\sharp}\mu \in C_{0}(e^{\perp})$, it is easy to see that the right hand side of \eqref{form2} equals
\begin{displaymath} \int_{S^{d - 1}} \|\pi_{e\sharp}\mu\|_{L^{p}(\pi_{e\sharp}\nu)}^{p} \, d\calH^{d - 1}(e). \end{displaymath}
This completes the proof of the lemma. \end{proof}

We can now prove Theorem \ref{mainTechnical}, which implies Theorem \ref{main}.

\begin{proof}[Proof of Theorem \ref{main}] Fix $2(d - 1) - s < t < d - 1$. It suffices to prove that if $\nu \in \calM(\R^{d})$ is a fixed measure with $I_{t}(\nu) < \infty$, and $\spt \mu \cap \spt \nu = \emptyset$, then
\begin{displaymath} \pi_{x\sharp}\mu_{x} \in L^{p}(S^{d - 1}) \qquad \text{for } \nu \text{ a.e. } x \in \R^{d}, \end{displaymath}
whenever
\begin{equation}\label{assumptions} 1 < p \leq \min\left\{2 - \tfrac{t}{(d - 1)},\tfrac{t}{2(d - 1) - s}\right\}. \end{equation}
We will treat the numbers $d,p,s,t$ as "fixed" from now on, and in particular the implicit constants in the $\lesssim$ notation may depend on $d,p,s,t$. Note that the right hand side of \eqref{assumptions} lies in $(1,2)$, so this is a non-trivial range of $p$'s. Fix $p$ as in \eqref{assumptions}. The plan is to show that
\begin{equation}\label{form4} \int \|\pi_{x\sharp}\mu_{x}\|_{L^{p}}^{p} \, d\nu(x) < \infty. \end{equation}  
This will be done via Lemma \ref{lem1}, but we first need to reduce to the case $\mu \in C_{0}(\R^{d})$. Let $(\psi_{n})_{n \in \N}$ be a standard approximate identity on $\R^{d}$, and write $\mu_{n} = \mu \ast \psi_{n}$. Then $\pi_{x\sharp}(\mu_{n})_{x}$ converges weakly to $\pi_{x\sharp}\mu_{x}$ for any fixed $x \in \spt \nu \subset \R^{d} \setminus \spt \mu$:
\begin{displaymath} \int f(e) \, d[\pi_{x\sharp}\mu_{x}(e)] = \lim_{n \to \infty} \int f(e) \, d\pi_{x\sharp}(\mu_{n})_{x}(e), \qquad f \in C(S^{d - 1}). \end{displaymath}
It follows that
\begin{displaymath} \|\pi_{x\sharp}\mu_{x}\|_{L^{p}(S^{d - 1})}^{p} \leq \liminf_{n \to \infty} \|\pi_{x\sharp}(\mu_{n})_{x}\|_{L^{p}(S^{d - 1})}^{p}, \quad x \in \spt \nu, \end{displaymath}
and consequently
\begin{displaymath} \int \|\pi_{x\sharp}\mu_{x}\|_{L^{p}(S^{d - 1})}^{p} \, d\nu(x) \leq \liminf_{n \to \infty} \int \|\pi_{x\sharp}(\mu_{n})_{x}\|_{L^{p}(S^{d - 1})}^{p} \, d\nu(x) \end{displaymath}
by Fatou's lemma. Now, it remains to find a uniform upper bound for the terms on the right hand side; the only information about $\mu_{n}$, which we will use, is that $I_{s}(\mu_{n}) \lesssim I_{s}(\mu)$. With this in mind, we simplify notation by denoting $\mu_{n} := \mu$. For the remainder of the proof, one should keep in mind that $\pi_{e\sharp}\mu \in C_{0}^{\infty}(e^{\perp})$ for $e \in S^{d - 1}$, so the integral of $\pi_{e\sharp}\mu$ with respect to various Radon measures on $e^{\perp}$ is well-defined, and the Fourier transform of $\pi_{e\sharp}\mu$ on $e^{\perp}$ (identified with $\R^{d - 1}$) is a rapidly decreasing function. 

We start by appealing to Lemma \ref{lem1}:
\begin{equation}\label{form3} \int \|\pi_{x\sharp}\mu_{x}\|_{L^{p}(S^{d - 1})}^{p} \, d\nu(x) = \int_{S^{d - 1}} \|\pi_{e\sharp} \mu\|_{L^{p}(\pi_{e\sharp}\nu)}^{p} \, d\calH^{d - 1}(e). \end{equation}
Next, we estimate the $L^{p}(\pi_{e\sharp}\nu)$-norms of $\pi_{e\sharp}\mu$ individually, for $e \in S^{d - 1}$ fixed. We start by recording the standard fact that $I_{t}(\pi_{e\sharp}\nu) < \infty$ for $\calH^{d - 1}$ almost every $e \in S^{d - 1}$, and we will only consider those $e \in S^{d - 1}$ satisfying this condition. Recall that $1 < p \leq t/[2(d - 1) - s]$. Fix $f \in L^{q}(\pi_{e\sharp}\nu)$, with $q = p'$ and $\|f\|_{L^{q}(\pi_{e\sharp}\nu)} = 1$, and note that
\begin{displaymath} I_{2(d - 1) - s}(f \, d\pi_{e\sharp}\nu) = \iint \frac{f(x)f(y) \, d\pi_{e\sharp}\nu(x) \, d\pi_{e\sharp}\nu(y)}{|x - y|^{2(d - 1) - s}} \lesssim I_{t}(\pi_{e\sharp}\nu)^{1/p} \end{displaymath}
by H\"older's inequality. It now follows from Theorem 17.3 in \cite{Mat} that
\begin{align*} \int \pi_{e\sharp}\mu \cdot f \, d\pi_{e\sharp}\nu & \lesssim \sqrt{I_{2(d - 1) - s}(f \, d\pi_{e\sharp}\nu)}\|\pi_{e\sharp}\mu\|_{H^{[s - (d - 1)]/2}}\\
& \lesssim \left(I_{t}(\pi_{e\sharp}\nu) \right)^{1/2p}\left(\int_{e^{\perp}} |\widehat{\pi_{e\sharp}\mu}(\xi)|^{2}|\xi|^{s - (d - 1)} \, d\xi \right)^{1/2}. \end{align*}
Since the function $f \in L^{q}(\pi_{e\sharp}\nu)$ with $\|f\|_{L^{q}(\pi_{e\sharp}\nu)} = 1$ was arbitrary, we may infer by duality that
\begin{displaymath} \|\pi_{e\sharp}\mu\|_{L^{p}(\pi_{e\sharp}\nu)} \lesssim \left(I_{t}(\pi_{e\sharp}\nu)\right)^{1/2p}\left(\int_{e^{\perp}} |\widehat{\pi_{e\sharp}\mu}(\xi)|^{2}|\xi|^{s - (d - 1)} \, d\xi \right)^{1/2}. \end{displaymath}
We can finally estimate \eqref{form3}. We use duality once more, so fix $f \in L^{q}(S^{d - 1})$ with $\|f\|_{L^{q}(S^{d - 1})} = 1$. Then, write
\begin{align*} \int_{S^{d - 1}} & \|\pi_{e\sharp}\mu\|_{L^{p}(\pi_{e\sharp}\nu)} \, \cdot f(e) \, d\calH^{d - 1}(e)\\
& \lesssim \int_{S^{d - 1}} \left(I_{t}(\pi_{e\sharp}\nu)\right)^{1/2p}\left(\int_{e^{\perp}} |\widehat{\pi_{e\sharp}\mu}(\xi)|^{2}|\xi|^{s - (d - 1)} \, d\xi \right)^{1/2} \cdot f(e) \, d\calH^{d - 1}(e)\\
& \lesssim \left( \int_{S^{d - 1}} I_{t}(\pi_{e\sharp}\nu)^{1/p} \cdot f(e)^{2} \, d\calH^{d - 1}(e) \right)^{1/2}\left(\int_{S^{d - 1}} \int_{e^{\perp}} |\widehat{\pi_{e\sharp}\mu}(\xi)|^{2}|\xi|^{s - (d - 1)} \, d\xi \, d\calH^{d - 1}(e) \right)^{1/2}. \end{align*} 
The second factor is bounded by $\lesssim I_{s}(\mu)^{1/2} < \infty$, using (generalised) integration in polar coordinates, see for instance (2.6) in \cite{MO}. To tackle the first factor, say "$I$", write $f^{2} = f \cdot f$ and use H\"older's inequality again:
\begin{displaymath} I \lesssim \left(\int_{S^{d - 1}} I_{t}(\pi_{e\sharp}\nu) \, \cdot f(e)^{p} \, d\calH^{d - 1}(e) \right)^{1/2p} \cdot \|f\|_{L^{q}(S^{d - 1})}^{1/2} \end{displaymath}
The second factor equals $1$. To see that the first factor is also bounded, note that if $B(e,r) \subset S^{d - 1}$ is a ball, then
\begin{displaymath} \int_{B(e,r)} f^{p} \, d\calH^{d - 1} \leq \left(\calH^{d - 1}(B(e,r))\right)^{2 - p} \cdot \left( \int_{S^{d - 1}} f^{q} \, d\calH^{d - 1} \right)^{p - 1} \lesssim r^{(d - 1)(2 - p)}. \end{displaymath}
Thus, $\sigma = f^{p} \, d\calH^{d - 1}$ is a Frostman measure on $S^{d - 1}$ with exponent $(d - 1)(2 - p)$. Now, it is well-known (and first observed by Kaufman \cite{Ka}) that
\begin{displaymath} \int_{S^{d - 1}} I_{t}(\pi_{e\sharp}\nu) \, d\sigma(e) = \iint \int_{S^{d - 1}} \frac{d\sigma(e)}{|\pi_{e}(x) - \pi_{e}(y)|^{t}} \, d\nu(x) \, d\nu(y) \lesssim I_{t}(\nu), \end{displaymath}
as long as $t < (d - 1)(2 - p)$, which is implied by \eqref{assumptions}. Hence $I \lesssim I_{t}(\nu)^{1/2p}$, and finally
\begin{displaymath} \int_{S^{d - 1}} \|\pi_{e\sharp}\mu\|_{L^{p}(\pi_{e\sharp}\nu)} \, \cdot f(e) \, d\calH^{d - 1}(e) \lesssim I_{t}(\nu)^{1/2p}I_{s}(\mu)^{1/2} \end{displaymath}
for all $f \in L^{q}(S^{d - 1})$ with $\|f\|_{L^{q}(S^{d - 1})} = 1$. By duality, it follows that
\begin{displaymath} \eqref{form3} \lesssim I_{t}(\nu)^{1/2p}I_{s}(\mu)^{1/2} < \infty. \end{displaymath}
This proves \eqref{form4}, using \eqref{form3}. The proof of Theorem \ref{mainTechnical} is complete. \end{proof}

\end{document}